\newbox\xrat@below
\newbox\xrat@above
\newcommand{\longrightarrowtail}[2][]{%
  \setbox\xrat@below=\hbox{\ensuremath{\scriptstyle #1}}%
  \setbox\xrat@above=\hbox{\ensuremath{\scriptstyle #2}}%
  \pgfmathsetlengthmacro{\xrat@len}{+2em}%
  \mathrel{\tikz [>->,baseline=-.75ex]
                 \draw (0,0) -- node[below=-2pt] {\box\xrat@below}
                                node[above=-2pt] {\box\xrat@above}
                       (\xrat@len,0) ;}}
\numberwithin{equation}{section} \DeclareMathSizes{2}{10}{12}{13}
\newtheorem{thm}{Proposition}[section]
\newtheorem{defn}[thm]{Definition}
\title{Dennis trace map for certain $K$-groups of categories with cofibrations }
\author{Abhishek Banerjee}
\date{}
\begin{document}

\maketitle

\centerline{\emph{Hausdorff Institute for Mathematics, Poppelsdorfer Allee 45,  D-53115, Bonn, Germany.}}
\centerline{\emph{Email: abhishekbanerjee1313@gmail.com}}
\medskip
\begin{abstract}

Let $\mathcal C$ be a small  category with cofibrations. In this paper, we define the $K$-theory and Hochschild
homology groups of $\mathcal C$ of order $Y$, where $Y$ is an ordered finite simplicial set with basepoint. Further, we construct
the Dennis trace map between these groups. 

\medskip
\centerline{\bf R\'{e}sum\'{e}}

\medskip
Soit $\mathcal C$ une petite cat\'{e}gorie avec cofibrations. Dans cet article, nous d\'{e}finissons ses groupes
de $K$-th\'{e}orie et homologie de Hochschild d'ordre $Y$, o\`{u} $Y$ est un ensemble simplicial ordonn\'{e} et fini avec point de base. 
De plus, nous construisons le morphisme de Dennis entre ces groupes. 

\end{abstract}

\medskip

\section{Introduction}

\medskip
In \cite{Wald}, Waldhausen introduced categories with cofibrations and defined their $K$-theory groups by means of the ``$S$-construction''. For any $n\geq 0$, Waldhausen's $S$-construction associates to a small category $\mathcal C$ with cofibrations a
category $S_n\mathcal C$. An object of $S_n\mathcal C$ is a chain of $n$ composable cofibrations  in $\mathcal C$ starting with the zero object (see \eqref{2.1}) Then, the objects of the simplicial category $S_\bullet\mathcal C=\{S_n\mathcal C\}_{n\geq 0}$ determine 
a simplicial set $S\mathcal C=\{obj(S_n\mathcal C)\}_{n\geq 0}$. Then, the  $K$-theory groups 
 of the category
$\mathcal C$ are defined to be the homotopy groups  of the loop space $\Omega|S\mathcal C|$ of the geometric realization of the 
simplicial set $S\mathcal C$. 

\medskip
Let $Ord_*$ denote the category of finite, totally ordered sets with basepoint. Let $SmCat_0$ denote the category of small categories with  zero objects. The starting point for this article is the fact that the association $n\mapsto S_n\mathcal C$ extends naturally to a functor $S(\mathcal C)$ from $Ord_*$ to $SmCat_0$. Then, if we take a simplicial object $Y:\Delta^{op}\longrightarrow Ord_*$ of
$Ord_*$, i.e., $Y$ is an ordered finite simplicial set with basepoint, we consider the composition (see \eqref{2.4})
\begin{equation}\label{1.2}
\begin{CD}
S^Y(\mathcal C):\Delta^{op}  @>Y>> Ord_* @>S(\mathcal C)>> SmCat_0 @>obj>> Sets_*
\end{CD}
\end{equation} The purpose of this paper is to study the  $K$-groups $K_p^Y(\mathcal C)$, $p\geq 0$ of $\mathcal C$ of order $Y$ which we define to be the homotopy groups
of the loop space of the geometric realization of the simplicial set $S^Y(\mathcal C)$ (see \eqref{2.5}). If $Y$, $Y':\Delta^{op}\longrightarrow Ord_*$ are simplicially homotopy equivalent as simplicial objects of $Ord_*$, we show that $K_p^{Y}(\mathcal C)\cong K_p^{Y'}(\mathcal C)$. Further, we  describe a product structure $K_p^Y(\mathcal C)\times K_q^Y(\mathcal D)\longrightarrow K_{p+q}^Y(\mathcal E)$ for a bi-exact functor $F:\mathcal C\times \mathcal D\longrightarrow \mathcal E$ (see \eqref{2.7qm}). 

\medskip

In the second part of the paper, we want to define Dennis trace maps from $K_p^Y(\mathcal C)$ to appropriate Hochschild homology
groups. For this, we consider the geometric realization $|CN(S^Y(\mathcal C))|$ of the bisimplicial set given by the cyclic nerve $CN(S^Y(\mathcal C))$ of the simplicial category $S(\mathcal C)\circ Y$. We define the Hochschild homology groups $HH_p^Y(\mathcal C)$ of $\mathcal C$ of order $Y$ in terms of the singular homology
of $|CN(S^Y(\mathcal C))|$ with coefficients in a given field $k$ (see \eqref{3.3df}). Again, if $Y$, $Y':\Delta^{op}\longrightarrow Ord_*$ are simplicially homotopy equivalent as simplicial objects of $Ord_*$, we show that $HH_p^{Y}(\mathcal C)\cong HH_p^{Y'}(\mathcal C)$. We show that the Hochschild homology groups also carry a product $HH_p^Y(\mathcal C)\otimes HH_q^Y(\mathcal D)
\longrightarrow HH_{p+q+1}^Y(\mathcal E)$ for a bi-exact functor $F:\mathcal C\times \mathcal D\longrightarrow \mathcal E$ (see \eqref{3.6cv}). Finally, for any $p\geq 0$, we construct a Dennis trace map
$D_p^Y:K_p^Y(\mathcal C)\longrightarrow HH_p^Y(\mathcal C)$.

\section{The $K$-groups $K_p^Y(\mathcal C)$ for a category with cofibrations}

\medskip
 In this section and throughout this paper, we let $\mathcal C$ be a small category with cofibrations in the sense of 
Waldhausen \cite{Wald}. In other words, $\mathcal C$ is a category with a zero object together with a subcategory $co\mathcal C$ 
satisfying the axioms (Cof1) and (Cof2) below. The morphisms in $co\mathcal C$ will be referred to as cofibrations
and denoted by feathered arrows ``$\rightarrowtail$''. 

\medskip
(Cof1) Every isomorphism in $\mathcal C$ is a cofibration. For any object $A$ in $\mathcal C$, the canonical morphism
$0\longrightarrow A$ is a cofibration.

(Cof2) Given a cofibration $A\rightarrowtail B$, its pushout $C\coprod_AB$ along any other morphism 
$A\longrightarrow C$ exists in $\mathcal C$ and the canonical morphism $C\longrightarrow C\coprod_AB$
is a cofibration. 

\medskip
Given a cofibration $A\rightarrowtail B$ in $\mathcal C$, its pushout 
$0\coprod_AB$ along the morphism $A\longrightarrow 0$ will be denoted by $B/A$. The canonical morphism
from $B$ to the pushout $B/A=0\coprod_AB$ is referred to as a quotient map and denoted by $B\twoheadrightarrow B/A$. 
The sequence $A\rightarrowtail B\twoheadrightarrow B/A$ is referred to as a cofibration sequence. A functor
between categories with cofibrations is said to be exact if it takes $0$ to $0$, preserves cofibrations 
as well as the pushout diagrams arising from axiom (Cof2). 

\medskip
Given $\mathcal C$ as above, we let $S_\bullet\mathcal C$ be the simplicial category associated to $\mathcal C$ by
Waldhausen's $S$-construction (see \cite[1.3]{Wald}). More explicitly, for any $n\geq 0$, an object of $S_n\mathcal C$ is a sequence
$(a_0,a_1,...,a_{n-1})$ of composable cofibrations:
\begin{equation}\label{2.1}
0=A_{0}\longrightarrowtail[]{a_0} A_{1}\longrightarrowtail[]{a_1}A_{2}\longrightarrowtail[]{a_2}\dots 
\longrightarrowtail[]{a_{n-1}} A_{n}
\end{equation} together with a choice of quotients $A_{ij}=A_{j}/A_{i}$. Further, for any $i\leq j\leq k$, 
$A_{ij}\rightarrowtail A_{ik}\twoheadrightarrow A_{jk}$ is a cofibration sequence. 

\medskip
We now let $Ord_*$ denote the category of finite totally ordered sets $(Z,\ast)=\{\ast < z_1<z_2<....<z_k\}$ with basepoint
$\ast$. A morphism $\phi:(Z,\ast)\longrightarrow (Z',\ast)$ in $Ord_*$ satisfies $\phi(\ast)=\ast$ and $\phi(x)\leq \phi(y)\in Z'$ for
any $x\leq y$ in $Z$. Let $\overset{\rightarrow}{\Gamma}_*$ denote the subcategory of $Ord_*$  consisting of the objects $[n]=\{0<1<2<...<n\}$ (with basepoint $0$) for 
any $n\geq 0$. Then, the category $\mathcal C$ determines a functor:
\begin{equation}\label{2.2}
S(\mathcal C):\overset{\rightarrow}{\Gamma}_*\longrightarrow SmCat_0 \qquad [n]\mapsto S_n\mathcal C
\end{equation}  where $SmCat_0$ denotes the category of   small categories with  zero objects. The morphisms in $SmCat_0$  are functors that preserve zero objects.  Given a morphism $\phi:[n]
\longrightarrow [m]$ in $\overset{\rightarrow}{\Gamma}_*$, we have an induced functor:
\begin{equation}\label{2.3}
S(\phi):S_n\mathcal C\longrightarrow S_m\mathcal C \qquad (a_0,...a_{n-1})\mapsto (b_0,...,b_{m-1}) \qquad b_j:=\prod_{i\in \phi^{-1}(j),i<n}a_i
\end{equation} where in \eqref{2.3}, for any $0\leq j\leq m-1$, the cofibration $b_j=\prod_{i\in \phi^{-1}(j),i<n}a_i$ is the composition of the cofibrations $a_i$ where $i$ lies in the ordered set $\phi^{-1}(j)$ and $i<n$. In \eqref{2.3} it is understood that when $\phi^{-1}(j)\cap \{0,1,2,...,n-1\}$ is empty, we set
$b_j=1$. It is clear that the functor $S(\mathcal C):\overset{\rightarrow}{\Gamma}_*\longrightarrow SmCat_0$ in \eqref{2.2}
extends to a functor from $Ord_*$ to $SmCat_0$ that we continue to denote by $S(\mathcal C):Ord_*\longrightarrow SmCat_0$. We are
now ready to define the $K$-groups of $\mathcal C$ with respect to an ordered finite simplicial set $Y$ with basepoint. 

\medskip
\begin{defn} Let $\mathcal C$ be a category as above and let $Y:\Delta^{op}\longrightarrow Ord_*$ be an ordered finite simplicial
set with basepoint. Let $Sets_*$ denote the category of pointed sets. We consider the following composition of functors:
\begin{equation}\label{2.4}
\begin{CD}
S^Y(\mathcal C):\Delta^{op}  @>Y>> Ord_* @>S(\mathcal C)>> SmCat_0 @>obj>> Sets_*
\end{CD}
\end{equation} where $obj:SmCat_0\longrightarrow Sets_*$ is the functor that associates a category in $SmCat_0$ to its
set of objects (with the zero object going to the basepoint). We consider the geometric realization $|S^Y(\mathcal C)|$ of the pointed simplicial set $S^Y(\mathcal C)$ in \eqref{2.4}
and its loop space $\Omega|S^Y(\mathcal C)|$. Then, we define  $K$-theory groups $K^Y_p(\mathcal C)$ of the category $\mathcal C$ of order
$Y$ to be the homotopy groups:
\begin{equation}\label{2.5}
K_p^Y(\mathcal C):=\pi_p(\Omega|S^Y(\mathcal C)|) \qquad \forall \textrm{ }p\geq 0
\end{equation}

\end{defn}

\medskip
We now show that homotopic maps of ordered simplicial sets determine identical morphisms on the $K$-groups
defined above.

\medskip
\begin{thm}\label{P2.2} Let $\mathcal C$ be a small category with cofibrations. Let $Y$, $Y':\Delta^{op}\longrightarrow Ord_*$ be ordered finite simplicial sets
with basepoint. Let $f,g:Y\longrightarrow Y'$ be morphisms of simplicial objects of 
$Ord_*$ that are simplicially homotopic. Then, $f$ and $g$ induce identical morphisms
$K(f)_p=K(g)_p:K_p^Y(\mathcal C)\longrightarrow K_p^{Y'}(\mathcal C)$ $\forall$ $p\geq 0$. In particular, if $Y$ and $Y'$ are 
simplicially homotopy equivalent as simplicial objects of $Ord_*$, $K_p^Y(\mathcal C)\cong K^{Y'}_p(\mathcal C)$. 
\end{thm}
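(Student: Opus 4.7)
The plan is to show that the whole construction $Y \mapsto |S^Y(\mathcal C)|$, and hence $Y \mapsto \Omega|S^Y(\mathcal C)|$, is a functor that sends simplicial homotopies to topological homotopies; then the standard fact that homotopic maps of pointed spaces induce identical maps on homotopy groups finishes the proof.

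First, I would unpack what a simplicial homotopy between $f,g:Y\longrightarrow Y'$ in the category of simplicial objects of $Ord_*$ means concretely. It consists of a family of morphisms $h_i:Y_n\longrightarrow Y'_{n+1}$ in $Ord_*$, for $0\leq i\leq n$, satisfying the familiar prismatic identities with the face and degeneracy maps of $Y$ and $Y'$, the boundary conditions $d_0h_0=f$ and $d_{n+1}h_n=g$. This formulation of simplicial homotopy is intrinsic to the target category (no product with $\Delta[1]$ is needed), and its crucial formal feature is that it is preserved by any functor: given a functor $F:Ord_*\longrightarrow\mathcal D$, the morphisms $F(h_i)$ form a simplicial homotopy between $F\circ f$ and $F\circ g$ in simplicial objects of $\mathcal D$.

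Second, I would apply this observation to the functor $obj\circ S(\mathcal C):Ord_*\longrightarrow Sets_*$ built in \eqref{2.4}. Since each $h_i$ is a morphism of $Ord_*$ and therefore preserves basepoints, its image under $obj\circ S(\mathcal C)$ is a basepoint-preserving map of pointed sets; the collection $\{obj\circ S(\mathcal C)(h_i)\}$ is then a basepoint-preserving simplicial homotopy in $Sets_*$ between the induced simplicial maps $S^Y(\mathcal C)(f)$ and $S^Y(\mathcal C)(g)$ from $S^Y(\mathcal C)$ to $S^{Y'}(\mathcal C)$.

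Third, by the standard result that geometric realization converts a simplicial homotopy of pointed simplicial sets into a pointed topological homotopy (the realization of the $h_i$'s assembles into a homotopy $|S^Y(\mathcal C)|\times I\longrightarrow |S^{Y'}(\mathcal C)|$ relative to the basepoint), the maps $|S^Y(\mathcal C)(f)|$ and $|S^Y(\mathcal C)(g)|$ are homotopic as pointed maps. Looping preserves pointed homotopy, so the induced maps on $\Omega|S^Y(\mathcal C)|\longrightarrow\Omega|S^{Y'}(\mathcal C)|$ are homotopic, hence induce identical morphisms on every $\pi_p$, which is exactly the assertion $K(f)_p=K(g)_p$. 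The final statement about simplicial homotopy equivalences is then immediate: a pair of mutually inverse homotopy equivalences yield maps on $K$-groups inverse to each other.

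The only real obstacle is of a bookkeeping rather than conceptual nature: one must check that the simplicial identities defining the homotopy are preserved verbatim under the application of $S(\mathcal C)$ on $Ord_*$ (which was extended from $\overset{\rightarrow}{\Gamma}_*$ in the discussion preceding \eqref{2.4}) and that basepoints are respected at every stage. Once this is in place the argument is entirely formal, resting only on the functoriality of $S(\mathcal C)$ and $obj$, and on the classical compatibility of simplicial and topological homotopy under geometric realization.
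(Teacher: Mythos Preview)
Your proposal is correct and follows essentially the same approach as the paper: both write out the combinatorial definition of simplicial homotopy via maps $h_i:Y_n\to Y'_{n+1}$, push it through the functor $obj\circ S(\mathcal C)$ to obtain a simplicial homotopy of pointed simplicial sets, and then invoke the standard passage from simplicial to topological homotopy to conclude equality on $\pi_p(\Omega|\cdot|)$. The paper is slightly terser in the final step, but the argument is the same.
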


\begin{proof}   Let $f_n,g_n:Y_n\longrightarrow Y'_n$ be the morphisms corresponding to $f$ and $g$ respectively at each level $n$. We are given that $f$ and $g$ are simplicially homotopic morphisms between simplicial objects 
of $Ord_*$. It follows that (see \cite[$\S$ 8.3.11]{Weibel}) there are morphisms $h_{i,n}:Y_n\longrightarrow Y'_{n+1}$, 
$0\leq i\leq n$, $n\geq 0$ in $Ord_*$ such that $d_{0,n+1}^{Y'}h_{0,n}=f_n$ and $d^{Y'}_{n+1,n+1}h_{n,n}=g_n$ and 
\begin{equation}\label{2.6}
\begin{array}{ll}
d_{i,n+1}^{Y'}h_{j,n} &= \left\{ 
\begin{array}{ll}
h_{j-1,n-1}d^Y_{i,n} & \mbox{if $i<j$} \\
d^{Y'}_{i,n+1}h_{i-1,n} & \mbox{if $i=j\ne 0$} \\
h_{j,n-1}d^Y_{i-1,n} & \mbox{if $i\geq j+1$} \\
\end{array} \right. \\
s^{Y'}_{i,n+1}h_{j,n} & = \left\{ 
\begin{array}{ll}
h_{j+1,n+1}s_{i,n}^Y & \mbox{if $i\leq j$} \\
h_{j,n+1}s_{i-1,n}^Y & \mbox{if $i>j$} \\
\end{array}\right.  \\
\end{array}
\end{equation} Here $d_{i,n}^Y:Y_n\longrightarrow Y_{n-1}$ (resp. $d_{i,n}^{Y'}:Y'_n\longrightarrow Y'_{n-1}$)
and $s_{i,n}^Y:Y_n\longrightarrow Y_{n+1}$ (resp. $s^{Y'}_{i,n}:Y'_n\longrightarrow Y'_{n+1}$) for $0\leq i\leq n$ are respectively
the face and degeneracy maps of the simplicial object $Y$ (resp. $Y'$) of $Ord_*$. We now consider the simplicial
sets $S^{Y}(\mathcal C)=\{S^Y(\mathcal C)_n\}_{n\geq 0}$ and $S^{Y'}(\mathcal C)=\{S^{Y'}(\mathcal C)_n\}_{n\geq 0}$ as defined in \eqref{2.4}. By definition, for any $n\geq 0$,
$S^Y(\mathcal C)_n=obj(S(\mathcal C)(Y_n))$ and $S^{Y'}(\mathcal C)_n=obj(S(\mathcal C)(Y'_n))$
along with induced maps $obj(S(\mathcal C)(f_n)), obj(S(\mathcal C)(g_n)):S^Y(\mathcal C)_n\longrightarrow S^{Y'}(\mathcal C)_n$. Then, the induced
maps $obj(S(\mathcal C)(h_{i,n})):S^Y(\mathcal C)_n\longrightarrow S^{Y'}(\mathcal C)_{n+1}$ 
define a simplicial homotopy between the two maps $S^f(\mathcal C)=\{obj(S(\mathcal C)(f_n))\}_{n\geq 0}$ and $
S^g(\mathcal C)=\{obj(S(\mathcal C)(g_n))\}_{n\geq 0}:S^Y(\mathcal C)\longrightarrow 
S^{Y'}(\mathcal C)$ of simplicial sets. 

\medskip
It now follows from the definitions in \eqref{2.5} that the induced morphisms $K(f)_p:=\pi_p(\Omega |S^f(\mathcal C)|), K(g)_p:=\pi_p(\Omega |S^g(\mathcal C)|):K_p^Y(\mathcal C)=\pi_p(\Omega |S^Y(\mathcal C)|)
\longrightarrow \pi_p(\Omega |S^{Y'}(\mathcal C)|)=K_p^{Y'}(\mathcal C)$  on the homotopy groups are identical.

\end{proof}

Given small categories with cofibrations $\mathcal C$, $\mathcal D$, we consider the category $\mathcal C\times 
\mathcal D$. An object of $\mathcal C\times \mathcal D$ is a pair $(C,D)$ where $C\in obj(\mathcal C)$ and $
D\in obj(\mathcal D)$. For $(C,D)$, $(C',D')\in obj(\mathcal C\times \mathcal D)$, the collection of morphisms from
$(C,D)$ to $(C',D')$ in $\mathcal C\times \mathcal D$ is given by $Hom_{\mathcal C}(C,C')\times Hom_{\mathcal D}(D,D')$. Given a small category with cofibrations $\mathcal E$, we now recall
that a functor $F:\mathcal C\times \mathcal D\longrightarrow \mathcal E$ is said to be bi-exact if it satisfies the following
two conditions (see, for instance, \cite[Definition 4.2.1]{RM}):

(1) For any $C\in obj(\mathcal C)$ (resp. $D\in obj(\mathcal D)$), the functor $F(C,\_\_):\mathcal D\longrightarrow 
\mathcal E$ (resp. 
$F(\_\_,D):\mathcal C\longrightarrow \mathcal E$) is exact.

(2) Given cofibrations $C\rightarrowtail C'$ and $D\rightarrowtail D'$ in the categories $\mathcal C$ and $
\mathcal D$ respectively, the canonical morphism from $F(C',D)\coprod_{F(C,D)}F(C,D')$ to  
$F(C',D')$ is a cofibration in $\mathcal E$. 

\begin{thm}\label{P2.3}
Let $Y:\Delta^{op}\longrightarrow Ord_*$ be an ordered finite simplicial
set with basepoint. Let $\mathcal C$, $\mathcal D$ and $\mathcal E$ be  small categories with cofibrations and let $F:
\mathcal C\times \mathcal D\longrightarrow \mathcal E$ be a bi-exact functor. 
Then, there exists a product structure:
\begin{equation}\label{2.7qm}
K_p^{Y}(\mathcal C)\times K_{q}^Y(\mathcal D)\longrightarrow K_{p+q}^Y(\mathcal E)\qquad\forall
\textrm{ }p,q\geq 0
\end{equation} 
\end{thm}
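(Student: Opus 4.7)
The plan is to imitate Waldhausen's classical construction of products in algebraic $K$-theory: the bi-exact functor $F$ will induce a pairing of bisimplicial pointed sets via an iterated $S$-construction, and a delooping equivalence will convert this into the desired product on $K$-groups.

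First I would check that for every $Z\in Ord_*$ the category $S(\mathcal E)(Z)$ inherits a natural structure of small category with cofibrations, with zero object the constant chain at $0$, cofibrations being morphisms of chains whose components are cofibrations compatible with the chosen quotient data, and pushouts formed termwise from (Cof2). This permits iterating the $S$-construction. Using bi-exactness, I would next construct, for each pair $Z,Z'\in Ord_*$, a functor
\[
S(\mathcal C)(Z)\times S(\mathcal D)(Z')\longrightarrow S\bigl(S(\mathcal E)(Z')\bigr)(Z)
\]
sending chains $(A_z)_{z\in Z}$ and $(B_{z'})_{z'\in Z'}$ to the doubly filtered object $\bigl(F(A_z,B_{z'})\bigr)_{(z,z')\in Z\times Z'}$; condition (1) for $F$ makes each row and column a chain of cofibrations, while condition (2) supplies the cofibration sequences between iterated quotients.

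Composing with $Y$ in each variable and applying $obj$ yields a map of bisimplicial pointed sets
\[
S^Y(\mathcal C)\wedge S^Y(\mathcal D)\longrightarrow T^Y(\mathcal E),\qquad T^Y(\mathcal E)_{n,m}:=obj\bigl(S(S(\mathcal E)(Y_m))(Y_n)\bigr),
\]
the smash product being well defined since $F$ sends zero to zero in either variable. Taking geometric realization produces a pairing $|S^Y(\mathcal C)|\wedge |S^Y(\mathcal D)|\to |T^Y(\mathcal E)|$, hence on homotopy groups the map $\pi_{p+1}(|S^Y(\mathcal C)|)\otimes \pi_{q+1}(|S^Y(\mathcal D)|)\to \pi_{p+q+2}(|T^Y(\mathcal E)|)$. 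Using $\pi_{p+1}(|S^Y(\mathcal C)|)=\pi_p(\Omega|S^Y(\mathcal C)|)=K_p^Y(\mathcal C)$ on the left, and invoking a delooping equivalence $\Omega|T^Y(\mathcal E)|\simeq |S^Y(\mathcal E)|$ on the right, this becomes the sought product \eqref{2.7qm}.

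The main obstacle is establishing the delooping equivalence $\Omega|T^Y(\mathcal E)|\simeq |S^Y(\mathcal E)|$: it is the $Y$-analog of the classical Waldhausen fact that iterating the $S$-construction deloops $|S_\bullet\mathcal E|$, whose proof relies on the additivity theorem and, in its usual form, on nerves of subcategories of weak equivalences rather than the plain $obj$ used here. A reasonable strategy is to establish the equivalence first for a canonical $Y$ (for example a simplicial model of $S^1$, where our construction is expected to specialize to Waldhausen's $K$-theory) and then transfer it to arbitrary $Y$ by means of Proposition \ref{P2.2} via simplicial homotopy equivalences.
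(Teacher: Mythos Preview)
Your approach via the iterated $S$-construction is genuinely different from the paper's, and considerably more involved. The paper avoids the bisimplicial object $T^Y(\mathcal E)$ entirely: it uses the \emph{diagonal} pairing $F_n:S_n\mathcal C\times S_n\mathcal D\to S_n\mathcal E$, sending a pair of chains $(A_\bullet,B_\bullet)$ to the single chain $F(A_0,B_0)\rightarrowtail F(A_1,B_1)\rightarrowtail\cdots\rightarrowtail F(A_n,B_n)$ (each step is a cofibration since it factors as $F(A_k,B_k)\to F(A_{k+1},B_k)\to F(A_{k+1},B_{k+1})$). Because $F$ is bi-exact, $F(A_k,0)=F(0,B_k)=0$, so this descends to a map of pointed sets $obj(S_n\mathcal C)\wedge obj(S_n\mathcal D)\to obj(S_n\mathcal E)$, and after composing with $Y$ one obtains directly a map of \emph{single} pointed simplicial sets $S^Y(\mathcal C)\wedge S^Y(\mathcal D)\to S^Y(\mathcal E)$. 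The product on $K$-groups then follows by passing to realizations and loop spaces, with no delooping equivalence needed.

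Your obstacle is a real gap, and the proposed workaround does not close it. Proposition~\ref{P2.2} only transports structure between $Y$ and $Y'$ that are simplicially homotopy equivalent as simplicial objects of $Ord_*$; an arbitrary $Y$ has no reason to be equivalent to a fixed simplicial model of $S^1$, so a delooping result established for one canonical $Y$ cannot be propagated to all $Y$ by that route. You also correctly observe that the classical delooping $|S_\bullet\mathcal E|\simeq\Omega|S_\bullet S_\bullet\mathcal E|$ relies on the weak-equivalence nerve and additivity, neither of which is present in the bare $obj$-version used in this paper. The diagonal argument sidesteps all of this.
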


\begin{proof} Given a bi-exact functor $F:\mathcal C\times \mathcal D\longrightarrow \mathcal E$, we consider the induced 
functors $F_{n}:S_n\mathcal C\times S_n\mathcal D\longrightarrow S_n\mathcal E$, $n\geq 0$ defined as follows:
\begin{equation} \label{2.9vp}
\begin{CD}
(0=A_{0}\rightarrowtail A_{1}\rightarrowtail \dots \rightarrowtail A_{n})\times (0=B_{0}\rightarrowtail B_{1}\rightarrowtail \dots \rightarrowtail
B_{n})\in S_n\mathcal C\times S_n\mathcal D \\ @VVV \\
(0=F(A_{0},B_{0})  \longrightarrowtail[]{}  F(A_{1},B_{1})  \longrightarrowtail[]{}  \dots  \longrightarrowtail[]{} F(A_{n},B_{n}))\in S_n\mathcal E 
\\
\end{CD}
\end{equation} We note that since $F$ is bi-exact, the  morphisms $F(A_{k},B_{k})\longrightarrow F(A_{k+1},B_{k})$ and 
$F(A_{k+1},B_k)\longrightarrow F(A_{k+1},B_{k+1})$ are cofibrations in $\mathcal E$ for each $k\geq 0$. Hence, 
each morphism $F(A_k,B_k)\longrightarrow F(A_{k+1},B_{k+1})$ in \eqref{2.9vp} obtained by composing
$F(A_{k},B_{k})\longrightarrow F(A_{k+1},B_{k})$ and 
$F(A_{k+1},B_k)\longrightarrow F(A_{k+1},B_{k+1})$ is a cofibration. 
 Using  the fact that $F$ is bi-exact, we also see  that $F(A_k,0)=F(0,B_k)=0$. Hence, it
follows that \eqref{2.9vp} induces a morphism $obj(S_n\mathcal C)\wedge
obj(S_n\mathcal D)\longrightarrow obj(S_n\mathcal E)$ of pointed sets. Then, if we consider the ordered finite simplicial set $Y=\{Y_n\}_{n\geq 0}$,   we have morphisms:
\begin{equation}\label{2.10}
F^Y_{n}:obj(S(\mathcal C)(Y_n))\wedge obj(S(\mathcal D)(Y_n))
\longrightarrow obj(S(\mathcal E)(Y_n)) \qquad\forall\textrm{ }n\geq 0
\end{equation} From \eqref{2.10}, it follows that we have a morphism $F^Y:S^Y(\mathcal C)\wedge 
S^Y(\mathcal D)\longrightarrow S^Y(\mathcal E)$ of pointed simplicial sets. Passing to geometric realizations and taking loop spaces, we have an induced map
$\Omega |S^Y(\mathcal C)|\wedge  \Omega |S^Y(\mathcal D)|\longrightarrow \Omega |S^Y(\mathcal E)|$. The result is now clear from the definitions
in \eqref{2.5}.

\end{proof}

\medskip

\medskip
\section{Hochschild homology and the Dennis trace map}

\medskip

\medskip
We recall that a cyclic set is a contravariant functor from Connes' cyclic category $\Delta C$ to the category $Sets$ of
sets (for details see, for instance, \cite[$\S$ 6.1.2.1]{Lod}). 
Given a small category $\mathcal A$, we can associate to it the cyclic set $CN(\mathcal A)=\{CN_n(\mathcal A)\}_{n\geq 0}$ given by its cyclic nerve; in other
words, for any $n\geq 0$, we set:
\begin{equation}
CN_n(\mathcal A):=\underset{(A_0,...,A_n)\in obj(\mathcal A)^{n+1}}{\coprod} Hom_{\mathcal A}(A_1,A_0)\times Hom_{\mathcal A}(A_2,A_1)\times \dots \times 
Hom_{\mathcal A}(A_0,A_n)
\end{equation} By abuse of notation, given a small category $\mathcal A$, we will also let $CN(\mathcal A)=\{CN_n
(\mathcal A)\}_{n\geq 0}$ denote
the underlying simplicial set of the cyclic set $CN(\mathcal A)$. 

\medskip
\begin{defn}\label{D3.1}  Let $\mathcal C$ be a small category with cofibrations  and let $Y:\Delta^{op}\longrightarrow Ord_*$ be an ordered
finite simplicial set with basepoint. Let $k$ be a given field. We consider the composition of functors:
\begin{equation}\label{3.2}
\begin{CD}
CN(S^Y(\mathcal C)):\Delta^{op} @>Y>> Ord_* @>S(\mathcal C)>> SmCat_0 @>CN>> SSets \\ 
\end{CD}
\end{equation} where $SSets$ denotes the category of simplicial sets.  Let $|CN(S^Y(\mathcal C))|$ denote the geometric realization of the bisimplicial set 
$CN(S^Y(\mathcal C))$. Then, we define the Hochschild homologies $HH_p^Y(\mathcal C)$ of the category $\mathcal C$
of order $Y$ over the field $k$ to be the homology groups:
\begin{equation}\label{3.3df}
HH_p^Y(\mathcal C):=H_{p+1}(|CN(S^Y(\mathcal C))|,k)\qquad \forall\textrm{ }p\geq 0
\end{equation} 

\end{defn}

\medskip
As noted before, the cyclic nerve of a small category is a cyclic set. As such,  the bisimplicial set 
$CN(S^Y(\mathcal C))$ in \eqref{3.2} is actually a ``cyclic $\times$ simplicial set'' (i.e., a cyclic set
in one coordinate and a simplicial set in the other; see, for instance, \cite[Appendix A.6]{RM}). Taking the geometric realization first in the simplicial direction, we obtain 
a cyclic space whose geometric realization carries the structure of an $S^1$-space (see \cite[$\S$ 7.1.4]{Lod}). Then,  we can consider the cyclic  geometric
realization $|CN(S^Y(\mathcal C))|^{cy}$ of $CN(S^Y(\mathcal C))$  which is given by the Borel space $|CN(S^Y(\mathcal C))|^{cy} :
=ES^1\times_{S^1}|CN(S^Y(\mathcal C))|$ (see \cite[$\S$ 7.2.2]{Lod}). Here $ES^1$ is any contractible
space on which the  topological group $S^1$ has a free action. We can define the cyclic homologies
$HC_p^Y(\mathcal C)$ of
the category $\mathcal C$ of order $Y$ to be the homology groups  $HC_p^Y(\mathcal C):=H_{p+1}(|CN(S^Y(\mathcal C))|^{cy},k)$, $ \forall\textrm{ }p\geq 0$. 

\medskip
\begin{thm}  Let $\mathcal C$ be a small category with cofibrations  and let $Y:\Delta^{op}\longrightarrow Ord_*$ be an ordered
finite simplicial set with basepoint. Then, the Hochschild and cyclic homologies of $\mathcal C$ of order $Y$ fit into a long exact sequence: 
\begin{equation}\label{3.3}
\dots \longrightarrow HH_p^Y(\mathcal C)\longrightarrow HC^Y_p(\mathcal C) \longrightarrow HC^Y_{p-2}(\mathcal C)
\longrightarrow HH_{p-1}^Y(\mathcal C)\longrightarrow \dots 
\end{equation}

\end{thm}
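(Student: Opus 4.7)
The plan is to recognize the claimed exact sequence as the Gysin sequence of a canonical principal $S^1$-bundle associated with the cyclic structure on $CN(S^Y(\mathcal C))$. This is the standard route to Connes' $SBI$ sequence in the setting of cyclic spaces, and at the level of topological spaces it only uses the fact that the cyclic realization of a cyclic space carries a natural $S^1$-action whose homotopy quotient is the Borel construction.

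To set up, write $Z := |CN(S^Y(\mathcal C))|$ for the bisimplicial realization in \eqref{3.2}, so that $HH_p^Y(\mathcal C) = H_{p+1}(Z, k)$. Since $CN(S^Y(\mathcal C))$ is a cyclic $\times$ simplicial set, realizing the simplicial coordinate first produces a cyclic space, and its cyclic realization $Z$ is naturally an $S^1$-space in the sense of \cite[\S 7.1--7.2]{Lod}. By construction the cyclic realization $|CN(S^Y(\mathcal C))|^{cy}$ appearing in the excerpt coincides with the Borel space $ES^1 \times_{S^1} Z$, so that $HC_p^Y(\mathcal C) = H_{p+1}(ES^1 \times_{S^1} Z, k)$.

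Next, I would consider the diagonal $S^1$-action on $ES^1 \times Z$; since $S^1$ acts freely on $ES^1$, it acts freely on the product, and the quotient map
\begin{equation*}
ES^1 \times Z \longrightarrow ES^1 \times_{S^1} Z
\end{equation*}
is a principal $S^1$-bundle. Contractibility of $ES^1$ gives $ES^1 \times Z \simeq Z$, so the Gysin sequence of this bundle reads, for every integer $n$,
\begin{equation*}
\cdots \to H_n(Z, k) \to H_n(ES^1 \times_{S^1} Z, k) \to H_{n-2}(ES^1 \times_{S^1} Z, k) \to H_{n-1}(Z, k) \to \cdots
\end{equation*}
Equivalently, this is what the Serre spectral sequence of the fibration $Z \to ES^1 \times_{S^1} Z \to BS^1$ produces, degenerating at $E^3$ because $H_\ast(BS^1, k)$ is concentrated in even degrees. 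Setting $n = p+1$ and substituting the definitions of $HH_\ast^Y(\mathcal C)$ and $HC_\ast^Y(\mathcal C)$ recovers the sequence \eqref{3.3} verbatim.

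The only point meriting care is to check that the $S^1$-action on $Z$ used in the Gysin argument is literally the action implicit in the definition of $|CN(S^Y(\mathcal C))|^{cy}$; in particular, one must confirm that realizing the simplicial coordinate of the cyclic $\times$ simplicial set $CN(S^Y(\mathcal C))$ and then taking the cyclic realization agrees, up to $S^1$-equivariant homotopy, with the two-step Borel construction used above. This is a formal consequence of the general framework of cyclic realizations from \cite[\S 7.1--7.2]{Lod} and depends neither on $\mathcal C$ nor on $Y$; once this identification is made explicit, the remainder of the argument is a routine piece of algebraic topology for $S^1$-spaces.
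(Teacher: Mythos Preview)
Your proposal is correct and follows essentially the same route as the paper: both invoke the homotopy fibration $Z \to ES^1\times_{S^1}Z \to BS^1$ (the paper cites \cite[\S 7.2.7]{Lod}) and extract the long exact sequence from the associated Serre/Gysin spectral sequence (the paper cites \cite[\S 7.2.10]{Lod}). Your write-up simply unpacks these citations in more detail, including the principal $S^1$-bundle formulation and the degree bookkeeping, so there is no substantive difference in strategy.
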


\begin{proof} From \cite[$\S$ 7.2.7]{Lod}, it follows that there exists a homotopy fibration
$|CN(S^Y(\mathcal C))|\longrightarrow |CN(S^Y(\mathcal C))|^{cy}\longrightarrow BS^1$, where $BS^1$ is the classifying
space of the topological group $S^1$. Hence, it follows from \cite[$\S$ 7.2.10]{Lod} that the long exact  sequence 
given by the homology spectral sequence corresponding to this fibration gives us the long exact sequence in
\eqref{3.3}. 

\end{proof} 

\begin{thm}\label{P3.3} Let $\mathcal C$ be a small category with cofibrations. Let $Y$, $Y':\Delta^{op}\longrightarrow Ord_*$ be ordered finite simplicial sets
with basepoint. Let $f,g:Y\longrightarrow Y'$ be morphisms of simplicial objects of 
$Ord_*$ that are simplicially homotopic. Then, $f$ and $g$ induce identical morphisms
$H(f)_p=H(g)_p:HH_p^Y(\mathcal C)\longrightarrow HH_p^{Y'}(\mathcal C)$, $\forall$ $p\geq 0$. In particular, if $Y$, $Y'$ are simplicially 
homotopy equivalent as simplicial objects of $Ord_*$, $HH_p^Y(\mathcal C)\cong HH^{Y'}_p(\mathcal C)$. 
\end{thm}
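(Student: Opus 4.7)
The approach is to mirror the proof of Proposition \ref{P2.2}, but at the level of the bisimplicial set $CN(S^Y(\mathcal{C}))$ rather than the simplicial set $S^Y(\mathcal{C})$. Since $CN\circ S(\mathcal{C}):Ord_*\longrightarrow SSets$ is a composition of functors, any simplicial homotopy in $Ord_*$ will carry over by functoriality to a simplicial homotopy at the level of cyclic nerves.

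First, I would apply the functor $CN\circ S(\mathcal{C})$ to the simplicially homotopic maps $f_n,g_n:Y_n\longrightarrow Y'_n$ and to the homotopy operators $h_{i,n}:Y_n\longrightarrow Y'_{n+1}$ from \eqref{2.6}, obtaining morphisms of simplicial sets
\begin{equation*}
CN(S(\mathcal{C})(f_n)),\, CN(S(\mathcal{C})(g_n)):CN(S(\mathcal{C})(Y_n))\longrightarrow CN(S(\mathcal{C})(Y'_n))
\end{equation*}
together with
\begin{equation*}
CN(S(\mathcal{C})(h_{i,n})):CN(S(\mathcal{C})(Y_n))\longrightarrow CN(S(\mathcal{C})(Y'_{n+1}))
\end{equation*}
for $0\leq i\leq n$, $n\geq 0$. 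Next, I would verify that the identities \eqref{2.6} are preserved by the composite functor $CN\circ S(\mathcal{C})$; this is essentially automatic from functoriality. Hence, the collection $\{CN(S(\mathcal{C})(h_{i,n}))\}$ defines a simplicial homotopy, in the $Y$-direction, between the morphisms of bisimplicial sets $CN(S^f(\mathcal{C}))=\{CN(S(\mathcal{C})(f_n))\}_{n\geq 0}$ and $CN(S^g(\mathcal{C}))=\{CN(S(\mathcal{C})(g_n))\}_{n\geq 0}$ from $CN(S^Y(\mathcal{C}))$ to $CN(S^{Y'}(\mathcal{C}))$.

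Passing to geometric realizations of these bisimplicial sets, this simplicial homotopy yields a topological homotopy
\begin{equation*}
|CN(S^f(\mathcal{C}))|\sim |CN(S^g(\mathcal{C}))|:|CN(S^Y(\mathcal{C}))|\longrightarrow |CN(S^{Y'}(\mathcal{C}))|
\end{equation*}
between the realized maps. Since singular homology with coefficients in the field $k$ is homotopy invariant, these induce equal morphisms on $H_{p+1}(\_\_,k)$, which by \eqref{3.3df} are precisely the maps $H(f)_p=H(g)_p:HH_p^Y(\mathcal{C})\longrightarrow HH_p^{Y'}(\mathcal{C})$. The second assertion then follows by applying this to both composites of a simplicial homotopy equivalence.

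The main point requiring attention is ensuring that geometric realization of bisimplicial sets respects simplicial homotopies carried only in one of the two simplicial directions; however, this is a standard fact, since one can realize first in the other (cyclic nerve) direction to reduce to the case of a simplicial homotopy of simplicial spaces and invoke the usual homotopy invariance. No additional subtlety arises from the cyclic structure of $CN$, because the argument operates only through the underlying simplicial structure used in \eqref{3.3df}.
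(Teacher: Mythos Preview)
Your proposal is correct and follows essentially the same approach as the paper: both arguments transport the simplicial homotopy data \eqref{2.6} through the functor $CN\circ S(\mathcal C)$ by pure functoriality, then pass to geometric realizations of the resulting bisimplicial sets and invoke homotopy invariance of singular homology. The only cosmetic difference is that the paper fixes the cyclic-nerve degree $n$ first and realizes in the $Y$-direction to obtain degreewise homotopic maps of cyclic spaces, whereas you phrase it as a simplicial homotopy of bisimplicial sets and suggest realizing first in the cyclic-nerve direction; both orderings lead to the same conclusion.
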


\begin{proof}For any fixed $n\geq 0$, we consider the simplicial set $CN_n(S^Y(\mathcal C))$ given by the composition: \begin{equation}\label{3.5}
\begin{CD}
CN_n(S^Y(\mathcal C)):\Delta^{op} @>Y>> Ord_* @>S(\mathcal C)>> SmCat_0 @>CN_n>> Sets \\ 
\end{CD}
\end{equation} Then, as in the proof of Proposition \ref{P2.2}, it follows that the maps of simplicial sets 
$CN_n(S^f(\mathcal C))$, $CN_n(S^g(\mathcal C)):CN_n(S^Y(\mathcal C))\longrightarrow CN_n(S^{Y'}(\mathcal C))$ induced by
$f$ and $g$ respectively
are simplicially homotopic. 

\medskip
If we consider the geometric realization of the cyclic $\times$ simplical set $CN(S^Y(\mathcal C))$ (resp. 
$CN(S^{Y'}(\mathcal C))$ in the simplicial direction, we obtain the cyclic space $\{|CN_n(S^Y(\mathcal C))|\}_{n\geq 0}$
(resp. $\{|CN_n(S^{Y'}(\mathcal C))|\}_{n\geq 0}$). Here, the space  $|CN_n(S^Y(\mathcal C))|$ (resp. 
$|CN_n(S^{Y'}(\mathcal C))|$) is given by the geometric realization of the simplicial set $CN_n(S^Y(\mathcal C))=\{CN_n(S(\mathcal C)(Y_m))\}_{m\geq 0}$ (resp. 
$CN_n(S^{Y'}(\mathcal C))=\{CN_n(S(\mathcal C)(Y'_m)\}_{m\geq 0}$). From the above, it follows that the morphisms between the cyclic spaces 
$\{|CN_n(S^Y(\mathcal C))|\}_{n\geq 0}$
and $\{|CN_n(S^{Y'}(\mathcal C))|\}_{n\geq 0}$ induced respectively by $f$ and $g$ are homotopic in each degree. Hence, the morphisms between the geometric realizations of the cyclic $\times$ simplicial sets $CN(S^Y(\mathcal C))$ and 
$CN(S^{Y'}(\mathcal C))$ induced by $f$ and $g$ respectively are homotopic. It follows from \eqref{3.3df} that 
the induced maps $H(f)_p,H(g)_p:HH_p^Y(\mathcal C)\longrightarrow HH_p^{Y'}(\mathcal C)$ on the Hochschild homologies 
are identical. 

\end{proof} 

From the proof of Proposition \ref{P3.3} and the definition $HC_p^Y(\mathcal C):=H_{p+1}(|CN(S^Y(\mathcal C))|^{cy},k)$, it is clear that given simplicially homotopic 
maps $f,g:Y\longrightarrow Y'$ as above, an analogous result holds
for induced maps on cyclic homologies. 

\medskip
\begin{thm}Let $Y:\Delta^{op}\longrightarrow Ord_*$ be an ordered finite simplicial
set with basepoint. Let $\mathcal C$, $\mathcal D$ and $\mathcal E$ be  small categories with cofibrations and let $F:
\mathcal C\times \mathcal D\longrightarrow \mathcal E$ be a bi-exact functor. 
Then, there exists a product:
\begin{equation}\label{3.6cv}
HH_p^{Y}(\mathcal C)\otimes HH_{q}^Y(\mathcal D)\longrightarrow HH_{p+q+1}^Y(\mathcal E)\qquad\forall
\textrm{ }p,q\geq 0
\end{equation} 

\end{thm}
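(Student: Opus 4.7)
The plan is to mimic the proof of Proposition \ref{P2.3} at the level of cyclic nerves, and then to invoke the K\"unneth theorem (applicable since the coefficients form a field $k$) to account for the degree shift by one. First, I would observe that the construction \eqref{2.9vp} makes sense not just for the standard ordered sets $[n]$ but for any $Z\in Ord_*$: bi-exactness of $F:\mathcal C\times\mathcal D\to\mathcal E$ supplies, for each such $Z$, an induced functor
$$F_Z:S(\mathcal C)(Z)\times S(\mathcal D)(Z)\longrightarrow S(\mathcal E)(Z)$$
defined object-wise by the same diagonal formula as in Proposition \ref{P2.3}, and the verification that each composition $F(A_k,B_k)\to F(A_{k+1},B_{k+1})$ is a cofibration in $\mathcal E$ is identical. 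Naturality in $Z$ together with precomposition by $Y:\Delta^{op}\to Ord_*$ then yields a morphism of simplicial objects in $SmCat_0$.

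Next, I would pass to cyclic nerves. Since morphism sets of a product category are literal products, there is a natural identification $CN_m(\mathcal A\times\mathcal A')=CN_m(\mathcal A)\times CN_m(\mathcal A')$ of simplicial sets, and so the functors above induce maps of bisimplicial sets
$$CN(S^Y(\mathcal C))\times CN(S^Y(\mathcal D))\longrightarrow CN(S^Y(\mathcal E)).$$
As in Proposition \ref{P2.3}, the identities $F(A,0)=F(0,B)=0$ (which hold on morphisms as well, since any morphism touching the zero object is itself the zero morphism) guarantee that whenever one factor lies at the basepoint the image is the basepoint. Hence the map descends to a morphism of pointed bisimplicial sets
$$CN(S^Y(\mathcal C))\wedge CN(S^Y(\mathcal D))\longrightarrow CN(S^Y(\mathcal E)).$$

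Taking geometric realizations, and using that the geometric realization of a smash of (well-pointed) bisimplicial sets is the smash of their geometric realizations, I obtain a pointed continuous map
$$|CN(S^Y(\mathcal C))|\wedge|CN(S^Y(\mathcal D))|\longrightarrow|CN(S^Y(\mathcal E))|.$$
For any pointed spaces $X,Y$, the reduced cross product with field coefficients gives a natural map
$$\tilde{H}_i(X;k)\otimes\tilde{H}_j(Y;k)\longrightarrow\tilde{H}_{i+j}(X\wedge Y;k),$$
which is actually an isomorphism when the homologies are finitely generated, by K\"unneth. Applying this with $(i,j)=(p+1,q+1)$ to $X=|CN(S^Y(\mathcal C))|$, $Y=|CN(S^Y(\mathcal D))|$, and then post-composing with the map on $\tilde{H}_{p+q+2}$ induced above, one lands in $\tilde{H}_{p+q+2}(|CN(S^Y(\mathcal E))|;k)$. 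Since $\tilde{H}_n=H_n$ for $n\geq 1$ and $(p+1)+(q+1)=(p+q+1)+1$, the definition $HH_r^Y(\mathcal C):=H_{r+1}(|CN(S^Y(\mathcal C))|;k)$ then yields the desired pairing
$$HH_p^Y(\mathcal C)\otimes HH_q^Y(\mathcal D)\longrightarrow HH_{p+q+1}^Y(\mathcal E).$$

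The routine part is the bookkeeping identifying $|X\wedge Y|$ with $|X|\wedge|Y|$ at the level of pointed bisimplicial sets; the only genuinely substantive point is the degree shift of $+1$ in the target index, which is forced by the convention $HH_p=H_{p+1}$ combined with the additivity of the cross product in total degree. If one wished to formulate a sharper statement, the same argument, applied to the cyclic realization $|CN(S^Y(-))|^{cy}$, would produce an analogous product on cyclic homologies, because smash products of pointed cyclic sets realize to smash products of $S^1$-spaces.
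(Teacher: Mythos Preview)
Your proof is correct and follows essentially the same route as the paper: construct the level-wise functors $F^Y_n$ on the $S$-construction, pass to cyclic nerves to obtain a map of bisimplicial sets, realize, and then use the homology cross product together with the shift $HH_p=H_{p+1}$ to land in degree $p+q+1$. The only difference is cosmetic: the paper works with the ordinary product $|CN(S^Y(\mathcal C))|\times|CN(S^Y(\mathcal D))|$ and the unreduced cross product, whereas you refine to the smash product and reduced homology (legitimately, since $F(-,0)=F(0,-)=0$); both yield the same pairing after noting $\tilde H_n=H_n$ for $n\geq 1$.
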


\begin{proof} As in the proof of Proposition \ref{P2.3}, for any $n\geq 0$, we have a functor $F_n:S_n\mathcal C
\times S_n\mathcal D\longrightarrow S_n\mathcal E$ induced by $F:\mathcal C\times \mathcal D\longrightarrow \mathcal E$. 
Given $Y=\{Y_n\}_{n\geq 0}:\Delta^{op}\longrightarrow Ord_*$, it is clear that the functors $F_n$, $n\geq  0$ induce  
$F^Y_n:S(\mathcal C)(Y_n)\times S(\mathcal D)(Y_n)\longrightarrow S(\mathcal E)(Y_n)$. Then,
for any $m\geq 0$, we have a map:
\begin{equation}\label{3.7}
\begin{array}{c}
CN_m(F^Y_n):CN_m(S(\mathcal C)(Y_n))\times CN_m(S(\mathcal D)(Y_n)) \longrightarrow CN_m(S(\mathcal E)(Y_n)) \\
\begin{CD}
(C_0\rightarrow C_n\rightarrow C_{n-1}\rightarrow ...\rightarrow C_1\rightarrow C_0)\times 
(D_0\rightarrow D_n\rightarrow D_{n-1}\rightarrow ...\rightarrow D_1\rightarrow D_0)
\\ @VVV  \\
 (F(C_0, D_0)\rightarrow F(C_n, D_n)\rightarrow F(C_{n-1}, D_{n-1})\rightarrow ...\rightarrow 
F(C_1, D_1)\rightarrow F(C_0, D_0))\\
\end{CD}
\end{array}
\end{equation} The map of bisimplicial sets in \eqref{3.7} induces a morphism $|CN(F^Y)|:|CN(S^Y(\mathcal C))|
\times |CN(S^Y(\mathcal D))|\longrightarrow |CN(S^Y(\mathcal E))|$ of geometric realizations. Finally, this gives us a 
product on homologies ($\forall$ $p$, $q\geq 0$):
\begin{equation}\label{3.8bpu}
H_{p+1}(|CN(S^Y(\mathcal C))|,k)
\otimes H_{q+1}(|CN(S^Y(\mathcal D))|,k)\longrightarrow H_{p+q+2}(|CN(S^Y(\mathcal E))|,k)
\end{equation} Comparing \eqref{3.8bpu} with the definitions in \eqref{3.3df}, we obtain the map in \eqref{3.6cv}.

\end{proof}

\begin{thm} Let $\mathcal C$ be a small category with cofibrations and let $Y:\Delta^{op}\longrightarrow Ord_*$ be an ordered
finite simplicial set with basepoint.  Then, for each $p\geq 0$, there is a morphism $D_p^Y:K_p^Y(\mathcal C)
\longrightarrow HH_p^Y(\mathcal C)$ from the $K$-groups of $\mathcal C$ of order $Y$ to its Hochschild homology
groups. 
\end{thm}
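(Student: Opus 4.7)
The plan is to realize $D_p^Y$ as the composition of three natural constructions: a map of pointed spaces $|S^Y(\mathcal C)| \to |CN(S^Y(\mathcal C))|$ coming from the inclusion of objects as identity endomorphisms, the loop--space identification $\pi_p(\Omega X) \cong \pi_{p+1}(X)$ that identifies $K_p^Y(\mathcal C)$ with $\pi_{p+1}(|S^Y(\mathcal C)|)$, and the Hurewicz homomorphism followed by change of coefficients to $k$.

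First I would construct the underlying map of spaces. For any small category $\mathcal A$ with zero object, the assignment $A \mapsto \mathrm{id}_A$ gives a pointed map
\begin{equation*}
\iota_{\mathcal A}: obj(\mathcal A) \longrightarrow CN_0(\mathcal A) = \underset{A \in obj(\mathcal A)}{\coprod} Hom_{\mathcal A}(A,A),
\end{equation*}
natural in $\mathcal A$ and carrying the basepoint $0 \in obj(\mathcal A)$ to the basepoint $\mathrm{id}_0 \in CN_0(\mathcal A)$. Applying $\iota$ pointwise along the simplicial category $n \mapsto S(\mathcal C)(Y_n)$ yields a morphism of pointed simplicial sets from $S^Y(\mathcal C)=\{obj(S(\mathcal C)(Y_n))\}_{n\geq 0}$ to the simplicial set $\{CN_0(S(\mathcal C)(Y_n))\}_{n \geq 0}$, which sits inside the bisimplicial set $CN(S^Y(\mathcal C))$ of \eqref{3.2} as its $m=0$ row in the cyclic direction. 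Composing with this canonical inclusion and passing to geometric realizations produces a pointed continuous map
\begin{equation*}
\tau^Y_{\mathcal C}: |S^Y(\mathcal C)| \longrightarrow |CN(S^Y(\mathcal C))|.
\end{equation*}

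Next, using the standard isomorphism $\pi_p(\Omega X) \cong \pi_{p+1}(X)$, the map $\tau^Y_{\mathcal C}$ induces a morphism $K_p^Y(\mathcal C) = \pi_{p+1}(|S^Y(\mathcal C)|) \longrightarrow \pi_{p+1}(|CN(S^Y(\mathcal C))|)$. Composing further with the Hurewicz homomorphism $\pi_{p+1}(|CN(S^Y(\mathcal C))|) \to H_{p+1}(|CN(S^Y(\mathcal C))|,\mathbb Z)$ and the change-of-coefficients map into $H_{p+1}(|CN(S^Y(\mathcal C))|,k) = HH_p^Y(\mathcal C)$ delivers the required Dennis trace $D_p^Y$.

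The main obstacle, rather than being conceptual, will be careful bookkeeping: one must verify that $\iota_{\mathcal A}$ is strictly pointed at each simplicial level $n$ (using that $S(\mathcal C)(Y_n)$ has a zero object corresponding to $A_j=0$ for all $j$, and that its identity is identified with the basepoint of $CN_0$), and that all squares commute in both the $Y$- and cyclic-nerve directions before geometric realization. Once this is checked, the remaining steps are formal and Propositions \ref{P2.2} and \ref{P3.3} ensure that $D_p^Y$ is compatible with simplicial homotopies of $Y$.
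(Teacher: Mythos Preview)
Your proposal is correct and follows essentially the same route as the paper: both build the map of spaces via the inclusion $A\mapsto \mathrm{id}_A$ from $obj(\mathcal A)$ into the cyclic nerve (the paper writes this as $obj(\mathcal A)=CN_0(\mathcal A)\to CN(\mathcal A)$, treating $CN_0(\mathcal A)$ as the constant simplicial set), then realize, use $\pi_p(\Omega X)\cong\pi_{p+1}(X)$, and finish with the Hurewicz map into $H_{p+1}(-,k)$. Your version is in fact slightly more careful in distinguishing $obj(\mathcal A)$ from $CN_0(\mathcal A)$ and in making the Hurewicz/change-of-coefficients step explicit.
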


\begin{proof} Given a small category $\mathcal A$ in $SmCat_0$, for any $n\geq 0$, we have a map 
$CN_0(\mathcal A)\longrightarrow CN_n(\mathcal A)$ of sets that takes any $A\in obj(\mathcal A)=CN_0(
\mathcal A)$ to $(A\overset{1}{\longrightarrow} A \overset{1}{\longrightarrow} A \overset{1}{\longrightarrow}...
\overset{1}{\longrightarrow}A)\in CN_n(\mathcal A)$ (map $A\overset{1}{\longrightarrow}A$ repeated $n$ times). This gives us a morphism $CN_0(\mathcal A)\longrightarrow CN(
\mathcal A)$ of simplicial sets (where $CN_0(\mathcal A)$ is treated as a constant simplicial set) and hence a morphism
$CN_0\longrightarrow CN$ of functors from $SmCat_0$ to $SSets$. Composing with $S(\mathcal C)\circ Y:\Delta^{op}
\longrightarrow SmCat_0$, we have a morphism $D^Y:S^Y(\mathcal C)\longrightarrow CN(S^Y(\mathcal C))$ of functors from 
$\Delta^{op}\longrightarrow SSets$. The latter induces a map $|D^Y|:|S^Y(\mathcal C)|\longrightarrow |CN(S^Y(\mathcal C))|$ 
of geometric realizations. Combining with the definitions in  \eqref{2.5} and \eqref{3.3df},  we have a morphism
$D_p^Y:K_p^Y(\mathcal C)\longrightarrow HH_p^Y(\mathcal C)$ given by the composition:
\begin{equation*}
\begin{CD}
K_p^Y(\mathcal C)=\pi_p(\Omega |S^Y(\mathcal C)|)=\pi_{p+1}(|S^Y(\mathcal C)|) @>\pi_{p+1}(|D^Y|)>>\pi_{p+1}(|CN(S^Y(\mathcal C))|)
\rightarrow H_{p+1}(|CN(S^Y(\mathcal C))|,k)=HH_p^Y(\mathcal C) \\
\end{CD}
\end{equation*}
\end{proof}

\medskip

\medskip

\small

\end{document}